\begin{document}

\newcommand{\F}{\mathbb{F}_q}
\newcommand{\Fp}{\mathbb{F}_p }
\newcommand{\Ffive}{\mathbb{F}_5}
\newcommand{\Ftwofive}{\mathbb{F}_{25}}
\newcommand{\ds}{\displaystyle}
\newcommand{\la}{\lambda}
\newcommand{\m}{\mathrm{mod}\;}
\newcommand{\T}{\mathcal{T}}
\newcommand{\Q}{\mathcal{Q}}

\newcommand{\al}{\alpha}
\newcommand{\be}{\beta}
\newcommand{\gl}{\gamma}
\numberwithin{equation}{section}

\markboth{Stephen D. Cohen}{Triples and quadruples of consecutive squares or non-squares}

%
\catchline{}{}{}{}{}
%

\title{TRIPLES AND QUADRUPLES OF CONSECUTIVE SQUARES\\  OR NON-SQUARES IN A FINITE FIELD }

\author{\footnotesize STEPHEN D. COHEN\footnote{Mailing addres: 6 Bracken Road, Portlethen, Aberdeen AB12 4TA, Scotland, UK}}

\address{{PROFESSOR EMERITUS, UNIVERSITY OF GLASGOW, UK} \\
\email{stephen.cohen@glasgow.ac.uk}}

\maketitle

\begin{history}
\received{(Day Month Year)}
\revised{(Day Month Year)}
\accepted{(Day Month Year)}
\comby{(xxxxxxxxx)}
\end{history}

\begin{abstract}Let $\F$ be the finite field of odd prime power order $q$.  We
 find explicit expressions for the number  of triples $\{\al-1,\al,\al+1 \}$ of consecutive non-zero squares   in $\F$ and similarly for the number of  triples of consecutive non-square elements.  A key ingredient is the evaluation of Jacobsthal sums over general finite fields  by Katre and Rajwade. This extends  results of Monzingo (1985)  to non-prime fields.
Curiously, the same machinery allows the evaluation of the number of consecutive quadruples    $\{\al  -1, \al,\al+1, \al +2\}$ of square and non-squares over $\F$, when $q$ is a power of 5.
\end{abstract}

\keywords{ finite fields, squares and non-squares, Jacobsthal sums.}

\ccode{2010 Mathematics Subject Classification: 11T24, 11T30, 16U99}

\section{Introduction}
Let $\F$ the  finite field of order $q=p^d$, where $q$ is an odd prime.  The elements of $\F$ comprise  $\frac{q-1}{2}$ elements that are non-zero squares  of elements of $\F$ (quadratic residues)  $\frac{q-1}{2}$ non-square elements (quadratic non-resdues)  together with $0$. 

 In brief, the purpose of this note is to provide an expression for the number of triples of consecutive elements of $\F$  which are either all non-zero squares or all non-squares.   To clarify this we summarise  what is meant by consecutive elements.  If  $q=p$, an odd prime,   the members of $\Fp$ possess a natural cyclic  ordering $(0,1,2, \ldots, p-1)$ and a run of {\em  distinct} consecutive elements $\{\al, \al+1, \ldots, \al+\ell -1\}$ has length $\ell$ not exceeding $p$.    For a general prime power $q=p^d$, if $\al, \be  \in \F$ are defined to be equivalent when $\al- \be \in \Fp$,  then $\F$ is divided into $p^{d-1}$ equivalence class of the form $\al+\Fp$,  whose elements can be cyclically ordered corresponding to the ordering  of the elements of  $\Fp$.   Again, the length of a maximum  run of consecutive elements of $\F$ is $p$. 

In particular, given any element $\al \in \F$, call  $\T_\al $ the triple of consecutive elements $\{\al-1,\al,\al+1\}$ in $\F$.     Define $M_q$ to be the number of $\al \in \F$ such that the members of $\T_\al$ are all  non-zero squares.  Similarly, define $N_q$ to be the number of $\al \in \F$ such that the members of $\T_\al$ are all non-squares.  In the situation in which  $q=p$, a prime,  Monzingo \cite{M86}  evaluated  each of $M_q$ and $N_q$ in four  cases.   Our theorem subdivides one of these into two  (Cases 4 and  5 below) to cater for square prime powers  $q$.

\begin{theorem} \label{main}
Let $q=p^d$, where $p$ is an odd prime.

\medskip
\noindent
{\bf \em Case 1.} If  $q \equiv 7\   (\m 8)$, then
\begin{equation}\label{Case1}
M_q=N_q = \frac{q-7}{8}.
\end{equation}

\noindent
{\bf  \em Case 2.} If $q \equiv 3\   (\m 8)$, then
\begin{equation}\label{Case2}
M_q=N_q = \frac{q-3}{8}.
\end{equation}

\medskip
\noindent
{\bf \em Case 3.}
If $q \equiv 5\ (\m 8)$, then
\begin{equation}\label{Case3}
M_q= \frac{q+2s-7}{8}; \quad N_q = \frac{q-2s-3}{8},
\end{equation}
where  integers $s,t$   are uniquely defined by  $q=s^2+t^2, p\nmid s,  s \equiv 1\ (\m 4)$.

\medskip
\noindent
{\bf  \em Case 4.}
If  $q =p^d \equiv 1\ (\m 8)$, where $p \equiv 1 \ (\m 4)$, then 
\begin{equation}\label{Case4}
M_q= \frac{q-2s-15}{8}; \quad N_q = \frac{q+2s-3}{8},
\end{equation}
where integers $s,t$ are defined as in Case $3$.

\medskip\noindent
{\bf \em Case 5.}
If  $q =p^d \equiv 1\ (\m 8)$, where $p \equiv 3 \ (\m 4)$,  then $q$ is a square and
\begin{equation}\label{Case5}
  M_q =\frac{1}{8}\left(q -2 (-1)^{d/2}\sqrt{q}-15\right); \quad N_q =\frac{1}{8}\left(q +2 (-1)^{d/2}\sqrt{q}-3\right). 
\end{equation}
\end{theorem}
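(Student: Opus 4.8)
\medskip
\noindent
The plan is to convert the counting of $M_q$ and $N_q$ into quadratic character sums and then reduce everything to elementary sums together with a single Jacobsthal sum. Let $\chi$ be the quadratic character of $\F$, with $\chi(0)=0$. For $\alpha\notin\{0,1,-1\}$ the three elements $\alpha-1,\alpha,\alpha+1$ are all non-zero, and then $\frac18\prod_{\varepsilon\in\{-1,0,1\}}\bigl(1+\chi(\alpha+\varepsilon)\bigr)$ equals $1$ if they are all squares and $0$ otherwise, while the same expression with each factor $1+\chi$ replaced by $1-\chi$ is the indicator that all three are non-squares; for $\alpha\in\{0,\pm1\}$ the triple $\T_\alpha$ contains $0$ and so contributes to neither count. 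Hence
\[
8M_q=\sum_{\alpha\ne 0,\pm1}\ \prod_{\varepsilon\in\{-1,0,1\}}\bigl(1+\chi(\alpha+\varepsilon)\bigr),\qquad
8N_q=\sum_{\alpha\ne 0,\pm1}\ \prod_{\varepsilon\in\{-1,0,1\}}\bigl(1-\chi(\alpha+\varepsilon)\bigr).
\]
Expanding the products, it is convenient to treat $M_q+N_q$ and $M_q-N_q$ separately: $4(M_q+N_q)$ is built from the constant term $q-3$ and the three ``quadratic'' sums $\sum\chi\bigl((\alpha-1)\alpha\bigr)$, $\sum\chi\bigl(\alpha(\alpha+1)\bigr)$, $\sum\chi\bigl((\alpha-1)(\alpha+1)\bigr)$, whereas $4(M_q-N_q)$ is built from the ``linear'' sums $\sum\chi(\alpha-1)$, $\sum\chi(\alpha)$, $\sum\chi(\alpha+1)$ and the ``cubic'' sum $\sum\chi\bigl((\alpha-1)\alpha(\alpha+1)\bigr)$, all over $\alpha\ne 0,\pm1$.

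\medskip\noindent
First I would dispose of the elementary sums. From $\sum_{x\in\F}\chi(x^{2}+bx+c)=-1$ whenever $b^{2}-4c\ne0$, together with the correction for the three omitted values $\alpha=0,\pm1$, each of the linear and quadratic sums equals a small integer expressible through $\chi(-1)$ and $\chi(2)$; since $\chi(-1)=1$ iff $q\equiv1\ (\m 4)$ and $\chi(2)=1$ iff $q\equiv\pm1\ (\m 8)$, these signs are determined by $q$ modulo $8$, and running through the possibilities yields the exact value of $M_q+N_q$ and the elementary part of $M_q-N_q$ in every case. In particular the cubic sum is the only term that can make $M_q$ and $N_q$ differ, which is why $M_q=N_q$ once it vanishes.

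\medskip\noindent
The heart of the matter is the cubic (Jacobsthal) sum $J:=\sum_{\alpha\in\F}\chi(\alpha^{3}-\alpha)$, whose negative is the trace of Frobenius of the elliptic curve $y^{2}=x^{3}-x$ over $\F$. If $q\equiv3\ (\m 4)$ --- which forces $p\equiv3\ (\m 4)$ with $d$ odd --- the curve is supersingular with purely imaginary conjugate Frobenius eigenvalues, so $J=0$; this settles Cases 1 and 2. If $q\equiv1\ (\m 4)$ with $p\equiv1\ (\m 4)$, the Katre--Rajwade evaluation of Jacobsthal sums over $\F$ gives $\sum_{\alpha}\chi(\alpha^{3}+\alpha)=-2s$, where $q=s^{2}+t^{2}$, $p\nmid s$, $s\equiv1\ (\m 4)$; the substitution $\alpha\mapsto i\alpha$ with $i^{2}=-1$ (available since $q\equiv1\ (\m 4)$) turns $J$ into $\chi(i)\sum_{\alpha}\chi(\alpha^{3}+\alpha)=-2s\,\chi(i)$, and as $\chi(i)=1$ exactly when $q\equiv1\ (\m 8)$ we get $J=-2s$ in Case 4 and $J=+2s$ in Case 3. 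Finally, when $p\equiv3\ (\m 4)$ the hypothesis $q\equiv1\ (\m 8)$ forces $d$ even; the reduction of $y^{2}=x^{3}-x$ at $p$ is supersingular with Frobenius eigenvalues $\pm i\sqrt p$, so over $\F$ both eigenvalues equal $(-1)^{d/2}\sqrt q$, whence $J=-2(-1)^{d/2}\sqrt q$, which is Case 5.

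\medskip\noindent
It then remains to assemble: in each case one substitutes the value of $J$ and of the elementary sums into $8M_q$ and $8N_q$ and simplifies, the normalisation $s\equiv1\ (\m 4)$ being exactly what makes the resulting integers divisible by $8$. I expect the genuine obstacle to be the penultimate step --- fixing the sign of the Jacobsthal sum $J$ in Cases 3--5 and reconciling the normalisation of the quadratic partition $q=s^{2}+t^{2}$ in the statement with the one in which Katre and Rajwade phrase their formula, together with treating the supersingular (square $q$) case on its own footing; once the sign is correctly attributed, all that is left is routine bookkeeping with the elementary character sums.
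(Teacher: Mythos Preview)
Your proposal is correct and follows essentially the same route as the paper: expand $8M_q$ and $8N_q$ as character sums, evaluate the linear and quadratic pieces via $\sum_\alpha\chi(\alpha)=0$ and Lemma~\ref{BEW}, and reduce everything to the single Jacobsthal value $J(-1)=\sum_\alpha\chi(\alpha^3-\alpha)$, then split by $q\bmod 8$.

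The only noteworthy divergence is in how you evaluate $J(-1)$. The paper applies Lemma~\ref{KR} (Katre--Rajwade) directly to $a=-1$, checking whether $-1$ is a fourth power in $\F$ to decide the sign, and in Case~5 simply reads off $s=(-1)^{d/2}\sqrt{q}$ from the statement of that lemma. You instead (i) evaluate $J(1)=-2s$ by Katre--Rajwade and then pass to $J(-1)$ via the substitution $\alpha\mapsto i\alpha$, picking up the factor $\chi(i)$, and (ii) handle Case~5 independently through the supersingular Frobenius eigenvalues of $y^2=x^3-x$. Both variants are standard and yield the same values; your detour through $\chi(i)$ is equivalent to the paper's fourth-power check on $-1$, and your elliptic-curve computation in Case~5 recovers exactly the $s=(-1)^{d/2}\sqrt{q}$ clause built into Lemma~\ref{KR}. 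The remaining bookkeeping with the $S_i$ and $T_i$ is identical in spirit to what the paper carries out explicitly.
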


\medskip
A key ingredient to our proof of Theorem \ref{main} is the extension by Katre and Rajwade in 1987 \cite{KR87} to the classical evaluation of the Jacobsthal sum, see Lemma \ref{KR}. 

Before proceeding, we conmment on related work  on triples of inteers modulo a prime in the literature.       Monzingo \cite {M85} derives his version of Theorem \ref{main} for triples  of quadratic residues and non-residues modulo a prime $p$ by elementary means.  In \cite{M86}  he shows the connection with Jacobsthal sums by deriving the latter out of considerations of the distribution of quadratic residues and non-residues modulo a prime  $p$.  Further, Theorem 2 of Sun  \cite{S02}   gives a description of the set of all triples   $\T$  comprising only non-zero squares or only non-squares  of integers modulo $p$.    The cardinality of this set is in agreement with our  values of the sum $M_q+N_q$ (in Cases 1-4).

\medskip
The author's work on Theorem \ref{main} in respect of consecutive triples of non-squares was originally  presented in \cite{ACDT23}. Its  motivation  came from commutative ring theory. Let $R$ be an integral ring (or domain), see \cite{ACDT23} and Theorem \ref{ACDT}, below.    An element $\al  \in R$ is called a $k$-potent if $\al^k=\al$.  Use of Theorem \ref{main}  (not just for prime fields)  allows the  classificaton of integral rings for which for some positive integer $n$,  each matrix in $M_n(R)$  is the sum of a tripotent  (3-potent) and a $k$-potent. Counting consecutive triples (or longer runs)  of squares and non and non-squares is also relevant to counting points on elliptic curves over $\F$, see \cite{KTVZ}.

\bigskip
Following  Theorem \ref{main}, one  might contemplate the evaluation of consecutive quadruples (say,$ \{\al-1, \al, \al+1, \al+2 \}$)  in $\F$ that are either all non-zero squares or all non-squares, although one would suspect this would involve the evaluation of character sums of higher degree. (Here to insist that the quadruples contain {\em distinct} elements one would assume that the characteristic $p$ is at least 5).     Surprisingly, perhaps, when $p=5$ (and only then), we are able to evaluate these numbers using nothing  beyond  Lemma \ref{KR}. 

\begin {theorem}\label{5thm}
Let $q=5^d$ and $m_q, n_q$ denote respectively, the number of  quadruples $ \{\al-1, \al, \al+1, \al+2 \}$ of consecutive members of $\F$ that are all non-zero squares or all non-squares.   Define integers $s, t$  as in Case $3$ of Theorem $\ref{main}$.   Then, if $d$ is even, we have
 \begin{equation}\label{5even}
m_q=\frac{q-10s-39}{16};\quad n_q=\frac{q+6s-7}{16}.
\end{equation}

If, on the other hand, $d$ is odd, then
\begin{equation} \label{5odd}
m_q=n_q= \frac{q+2s-7}{16}.
\end {equation}

\end{theorem}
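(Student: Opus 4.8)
The plan is to set up the counting of quadruples $\{\al-1,\al,\al+1,\al+2\}$ by the same character-sum method that underlies Theorem~\ref{main}, and then to exploit the accident $p=5$ which collapses the four-term product of quadratic characters into something expressible through Jacobsthal sums. Write $\eta$ for the quadratic character of $\F$ (with $\eta(0)=0$). For a quadruple to consist entirely of non-zero squares we need each of the four consecutive shifts to be a non-zero square, which is detected by $\prod_{i=-1}^{2}\frac{1+\eta(\al+i)}{2}$; expanding this product gives $\frac{1}{16}$ times a sum of $16$ terms, each a product of $\eta$ evaluated at a subset of $\{\al-1,\al,\al+1,\al+2\}$. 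Summing over $\al\in\F$ and removing the (few) $\al$ for which some shift is $0$, one reduces $m_q$ to a linear combination of character sums $\sum_{\al}\eta(f(\al))$ where $f$ ranges over the squarefree products of distinct factors among $x-1,x,x+1,x+2$. The analogous expansion with $\frac{1-\eta(\al+i)}{2}$ gives $n_q$; the two differ only in the signs attached to the odd-degree terms.

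The key observation to record next is that over $\Fp$ with $p=5$ the four points $-1,0,1,2$ form an arithmetic progression that is, after the substitution $\al\mapsto 2\al$ or a similar affine change, projectively equivalent to the set $\{0,1,-1,\infty\}$-type configurations that make the quartic $x(x-1)(x+1)(x+2)$ and the relevant cubics either reduce to translates of $x^3-x$ (Jacobsthal-type) or become perfect squares times lower-degree pieces. Concretely, one checks that modulo $5$ one has identities such as $(x-1)x(x+1)(x+2)\equiv (x^2+cx+e)^2-(\text{const})$ or that certain of the cubic sums $\sum\eta((\al-1)\al(\al+1))$, $\sum\eta(\al(\al+1)(\al+2))$, etc., coincide up to translation with the Jacobsthal sum $\phi(a)=\sum_{\al}\eta(\al^3+a\al)$ evaluated at specific $a\in\F_5^\times$. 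The linear sums $\sum_\al\eta(\al+i)$ vanish, the quadratic sums $\sum_\al\eta((\al+i)(\al+j))$ equal $-1$ by the standard formula, and the quartic sum is handled by the Weil bound sharpened to an exact value because, for $p=5$, the curve $y^2=x(x-1)(x+1)(x+2)$ has a CM / genus-one structure whose trace is $\pm 2s$ (this is exactly the place Case~3's $s,t$ re-enter). Assembling these, one gets $m_q$ and $n_q$ as $\frac{1}{16}(q+\text{(small constant)}+\text{(multiple of }s\text{ or }\sqrt q))$, and the parity of $d$ controls the sign/value of the degree-3 and degree-4 pieces in the same way the parity entered Case~4 versus Case~5.

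I would carry out the steps in this order: (i) expand the two products and reduce $m_q,n_q$ to an explicit $\mathbb{Z}$-linear combination of the at most $\binom{4}{1}+\binom{4}{2}+\binom{4}{3}+\binom{4}{4}$ character sums, being careful to account for the $\al$ with $\al+i=0$ (there are exactly four such $\al$, each killing the square-count but contributing $0$ or a correction to the non-square count, and one must decide whether $0$ itself counts as a ``non-square'' in the statement's convention); (ii) evaluate the linear and quadratic sums by elementary formulas; (iii) for $p=5$ specifically, identify each surviving cubic sum with a translate of a Jacobsthal sum and invoke Lemma~\ref{KR} to write it in terms of $s$ (and of $\sqrt q$ with the sign $(-1)^{d/2}$ when $d$ is even, $0$ when $d$ is odd, mirroring Cases 3--5); (iv) evaluate the quartic sum via the same CM input; (v) add everything and separate the cases $d$ even and $d$ odd. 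The main obstacle is step (iii)--(iv): verifying that, \emph{only} when $p=5$, every cubic and quartic factor arising from $\{x-1,x,x+1,x+2\}$ is $\F$-linearly equivalent (via affine substitution and square factors) to the Jacobsthal normal form $x^3+ax$, so that no genuinely new character sum of degree $\ge 3$ appears; this is precisely the ``curious'' coincidence the abstract alludes to, and pinning down the correct constants $a$ and the resulting signs in terms of $s$ and $(-1)^{d/2}$ is where the real care is needed, together with checking that the odd-degree sums flip sign consistently between $m_q$ and $n_q$ to produce the asymmetric $-10s$ versus $+6s$ in \eqref{5even} while cancelling to the common $+2s$ in \eqref{5odd}.
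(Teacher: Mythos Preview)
Your outline is essentially the paper's approach: expand $\prod_{i=-1}^{2}(1\pm\eta(\al+i))$, sort the resulting character sums by degree, evaluate the linear pieces via \eqref{sumla} and the quadratic pieces via Lemma~\ref{BEW}, reduce each cubic by an affine shift to a Jacobsthal sum $J(\pm1)$, and keep track of the corrections coming from the four excluded values $\al\in\{0,\pm1,-2\}$. The paper organises these as $S=\sum S_i$, $T=\sum T_i$, $U=\sum U_i$, $V$ for degrees $1,2,3,4$ respectively, just as you describe.

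Where your plan is under-specified is the quartic term, and here the paper does \emph{not} invoke any elliptic-curve or CM input. The trick is entirely elementary: after the shift $\al\mapsto\al+2$ one has $(\al+1)(\al+2)(\al+3)(\al+4)=(\al^2-1)(\al^2-4)$, and because $-4=1$ in characteristic~$5$ this equals $(\al^2-1)(\al^2+1)$. Substituting $\be=\al^2$ with weight $1+\eta(\be)$ then gives $V=\sum_\be\eta(\be^2-1)+\sum_\be\eta(\be(\be^2-1))=-1+J(-1)$ straight from Lemmas~\ref{BEW} and~\ref{KR}. So the quartic sum reduces to the \emph{same} Jacobsthal sum already appearing among the cubics; this is the ``curiosity'' and no Weil/CM argument is needed.

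One confusion to fix: since $p=5\equiv1\pmod4$, you are never in the $\sqrt{q}$ branch of Lemma~\ref{KR}; the value is always $\pm2s$ with $s$ defined by $q=s^2+t^2$, $5\nmid s$, $s\equiv1\pmod4$. The parity of $d$ enters only through whether $-1$ is a fourth power (i.e.\ whether $q\equiv1$ or $5\pmod8$), flipping the sign of $J(-1)$. In particular no Jacobsthal sum becomes~$0$ when $d$ is odd; rather, two of the four cubic sums contribute $J(-1)=2s$ and two contribute $J(1)=-2s$, and together with the excluded-point corrections (which involve $\la(2)=-1$) the total $U$ and $S$ both vanish, whence $m_q=n_q$.
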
 

\section{Character sums}

 Suppose throughout $q=p^d$ is an odd prime power and let $\la$ be the quadratic character on $\F$. Thus

\begin{equation}\label{sumla}
\sum_ \alpha\la(\alpha)=\sum_{\alpha \neq 0}\la(\alpha)=0,
\end{equation}
where $\ds{\sum_{\alpha}}$ stands for $\ds{\sum_{\alpha \in \F}}$ and $\ds{\sum_{\alpha \neq 0}}$ means that $\alpha =0$ is excluded from the sum. We need further evaluations of character sums. The first can be found in \cite[Theorem 2.1.2]{BEW98}.

\begin{lemma}\label{BEW}
Suppose $q$ is an odd prime power. Let $f(x)=x^2+bx+c \in \F[x]$. Assume $b^2-4c \neq 0$. Then
\[\sum_{\alpha \in \F}\la(f(\alpha))=-1.\]
\end{lemma}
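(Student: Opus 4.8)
The plan is to reduce the general quadratic to the normalized shape $\al^2 - D$ by completing the square, and then to evaluate the resulting sum by counting preimages under the squaring map. Since $q$ is odd, $2$ and $4$ are invertible in $\F$, so writing $f(x)=(x+b/2)^2-D$ with $D=(b^2-4c)/4$ and using that $\al\mapsto \al+b/2$ is a bijection of $\F$ gives
\[\sum_{\al}\la(f(\al))=\sum_{\al}\la(\al^2-D).\]
The hypothesis $b^2-4c\neq 0$ is precisely $D\neq 0$, so it suffices to prove $\sum_{\al}\la(\al^2-D)=-1$ whenever $D\neq 0$.

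First I would record the elementary fact that, for any $v\in\F$, the number of $\al\in\F$ with $\al^2=v$ equals $1+\la(v)$ (with the usual convention $\la(0)=0$): this gives $2,\,0,\,1$ according as $v$ is a nonzero square, a non-square, or $0$. Grouping the sum over $\al$ according to the value $v=\al^2$ therefore yields
\[\sum_{\al}\la(\al^2-D)=\sum_{v}\bigl(1+\la(v)\bigr)\la(v-D)=\sum_{v}\la(v-D)+\sum_{v}\la(v)\la(v-D).\]
The first sum vanishes by \eqref{sumla}, since $v\mapsto v-D$ is a bijection of $\F$ and so $\sum_{v}\la(v-D)=\sum_{w}\la(w)=0$.

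It remains to evaluate $T:=\sum_{v}\la(v)\la(v-D)$, and this is the single step requiring a small trick. The term $v=0$ contributes $0$ because $\la(0)=0$. For $v\neq 0$ I would write $\la(v)\la(v-D)=\la(v^2)\,\la\!\left(1-D/v\right)=\la\!\left(1-D/v\right)$, using multiplicativity of $\la$ together with $\la(v^2)=1$. Since $D\neq 0$, the substitution $w=D/v$ permutes the nonzero elements of $\F$, so $T=\sum_{w\neq 0}\la(1-w)$. By \eqref{sumla} the full sum $\sum_{w}\la(1-w)$ equals $0$; removing the $w=0$ term, whose value is $\la(1)=1$, leaves $T=-1$. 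Combining the two pieces gives $\sum_{\al}\la(f(\al))=0+(-1)=-1$, as claimed.

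The only genuine obstacle is the evaluation of $T$, and it is instructive to see where the hypothesis enters: the multiplicative substitution $w=D/v$ is valid exactly because $D\neq 0$, and it is what converts the quadratic character sum $T$ back into the elementary additive sum $\sum_{w}\la(1-w)$ that is controlled directly by \eqref{sumla}. Everything else in the argument is bookkeeping of the completing-the-square and preimage-counting steps.
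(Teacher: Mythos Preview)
Your proof is correct and complete; the completing-the-square reduction followed by the preimage-counting identity and the multiplicative substitution $w=D/v$ is the standard argument. The paper itself does not prove this lemma but simply cites \cite[Theorem 2.1.2]{BEW98}, so you have in fact supplied a self-contained proof where the paper relies on the reference.
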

The next result concerns the Jacobsthal sum $\ds{J(a) =\sum_{\alpha }\la(x(x^2+a)), a \in \F}$. Recall that
  \[ \la(-1)= \begin{cases} 1,  &\text{if } q \equiv 1 \ (\m 4),\\
  -1,  &\text{if } q \equiv 3\  (\m 4).
     \end{cases}     \]
Hence, by replacing $\alpha$ by $-\alpha$ in the expression for $J(a)$, we see that, if $ q \equiv 3\ (\m 4)$, then $J(a)=0$. On the other hand, when $q \equiv 1 \ (\m 4)$, we use an  extension of the classical evaluation of the Jacobsthal sum on prime fields given by Katre and Rajwade \cite{KR87} For any  $q \equiv 1 \ ( \m 4)$, they evaluate $J(a)$ for any non-zero $a \in \F$.  We only require the case in which $a$ is a square in $\F$.

\begin{lemma}[\cite{KR87}, Theorem 2]\label{KR}
Suppose that  $q=p^d \equiv 1 \ (\m 4)$, where $p$ is an odd prime and $a$ is a square in $\F$.    If $p \equiv 3\ (\m 4)$ (so that $d$ is even), let $s=(-1)^{d/2}\sqrt{q}$. If $p \equiv 1\ (\m 4)$, define $s$ uniquely by  $q=s^2+t^2, p \nmid s, s \equiv 1 \ (\m 4)$. Then
\[ J(a) = \begin{cases} -2s,  & \mbox{ if $a$ is a fourth power in $\F$,}\\
2s, & \mbox{ if $a$ is a square but not a fourth power in $\F$.}
\end{cases} \]
\end{lemma}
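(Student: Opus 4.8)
The plan is to reduce to a single value of $a$, to express $J(a)$ through a quartic character as the real part of a Jacobi sum, and finally to pin down the sign from the representation $q=s^2+t^2$.

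First I would record the scaling behaviour of $J$. Replacing $\alpha$ by $c\alpha$ with $c\neq 0$ in $J(a)=\sum_\alpha\la(\alpha(\alpha^2+a))$ and using $\la(c^3)=\la(c)$ yields
\[ J(a)=\la(c)\,J(ac^{-2}). \]
Since $4\mid q-1$, the nonzero squares form two cosets of the group of fourth powers. Choosing $c$ a nonzero square (so $c^{-2}$ is a fourth power and $\la(c)=1$) shows that $J$ is constant on each such coset; choosing $c$ a non-square (so $c^{-2}$ is a square but not a fourth power and $\la(c)=-1$) shows that its value on the fourth powers is the negative of its value on the square non-fourth-powers. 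Hence it suffices to evaluate $J(1)$ and to prove $J(1)=-2s$; the companion value $+2s$ is then immediate.

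Next, fix a character $\chi$ of $\F^{\ast}$ of order $4$, so $\chi^2=\la$ and, crucially, $\la(\alpha)=\chi(\alpha)^2=\chi(\alpha^2)$ for all $\alpha$. Putting $\beta=\alpha^2$ and using $\la(-1)=1$ (each nonzero square $\beta$ has two preimages $\pm\sqrt\beta$ contributing equally) gives
\[ J(1)=\sum_\alpha\la(\alpha)\la(\alpha^2+1)=2\sum_{\beta\ \mathrm{a\ nonzero\ square}}\chi(\beta)\,\la(\beta+1). \]
As $\chi(\beta)$ is real on squares and purely imaginary on non-squares while $\la(\beta+1)$ is real, the last sum is $\mathrm{Re}\,T$ with $T=\sum_\beta\chi(\beta)\la(\beta+1)$. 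The substitution $\beta\mapsto-\beta$ identifies $T=\chi(-1)\mathcal{J}(\chi,\la)$, where $\mathcal{J}(\mu,\nu)=\sum_x\mu(x)\nu(1-x)$ denotes a Jacobi sum. I would then pass to Gauss sums $g(\mu)=\sum_x\mu(x)\psi(x)$, with $\psi$ a fixed nontrivial additive character of $\F$: from $g(\chi)^2=\mathcal{J}(\chi,\chi)g(\la)$ and $g(\chi)g(\la)=\mathcal{J}(\chi,\la)g(\chi^3)$, together with $g(\la)^2=\la(-1)q=q$ and $g(\chi)g(\chi^3)=\chi(-1)q$, one obtains $\mathcal{J}(\chi,\la)=\chi(-1)\mathcal{J}(\chi,\chi)$ and hence $T=\mathcal{J}(\chi,\chi)$. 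Therefore
\[ J(1)=2\,\mathrm{Re}\,\mathcal{J}(\chi,\chi). \]

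Since $\mathcal{J}(\chi,\chi)$ is an algebraic integer of $\mathbb{Z}[i]$ with $|\mathcal{J}(\chi,\chi)|^2=q$, writing $\mathcal{J}(\chi,\chi)=A+Bi$ gives $A^2+B^2=q$, which is precisely a representation $q=s^2+t^2$, and $J(1)=2A$. The main obstacle is the sign and normalisation, namely showing $A=-s$. When $p\equiv 1\ (\m 4)$ this rests on the classical determination of the quartic Jacobi sum: a congruence for $\mathcal{J}(\chi,\chi)$ modulo a suitable power of $1+i$ forces $A$ to be odd with $A\equiv -1\ (\m 4)$, and a Stickelberger-type (prime-above-$p$) argument gives $p\nmid A$; together with the unique normalisation $s\equiv 1\ (\m 4)$, $p\nmid s$, this yields $A=-s$ and $J(1)=-2s$. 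When $p\equiv 3\ (\m 4)$ one has $d$ even, $B=0$ and $\mathcal{J}(\chi,\chi)=\pm\sqrt q$; here the sign is extracted from the Hasse--Davenport relation applied to the tower $\mathbb{F}_{p^2}\subset\F$, which gives $A=-(-1)^{d/2}\sqrt q=-s$. This sign bookkeeping, rather than the character-sum identities, is the delicate heart of the argument; the identity $J(1)=2\,\mathrm{Re}\,\mathcal{J}(\chi,\chi)$ reduces the whole lemma to it.
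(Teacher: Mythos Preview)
The paper does not prove this lemma; it is quoted verbatim as Theorem~2 of Katre--Rajwade \cite{KR87} and used as a black box. So there is no ``paper's own proof'' to compare against.

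Your outline is a sound plan for a proof and in fact retraces the structure of the Katre--Rajwade argument. The scaling identity $J(a)=\la(c)J(ac^{-2})$ and the reduction to $J(1)$ are correct, the derivation $J(1)=2\,\mathrm{Re}\,\mathcal{J}(\chi,\chi)$ via the quartic character is clean, and you rightly single out the sign determination of $\mathcal{J}(\chi,\chi)$ as the real content. One caution: the congruence normalisation $A\equiv -1\ (\m 4)$, $p\nmid A$ is classical for prime $q$, but for general $q=p^d$ this is exactly the point that required new work in \cite{KR87}; in your sketch the phrases ``Stickelberger-type argument'' and ``Hasse--Davenport relation'' gesture at what is needed, but a full proof would have to supply those details (and check that the normalisation $p\nmid s$, $s\equiv 1\ (\m 4)$ matches the Jacobi-sum congruences for all $d$, not just $d=1$). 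With that caveat, the approach is correct.
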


\section {Proof of Theorem \ref {main}}\label{secproof}

Obviously,  a triple $\T_\al$  cannot comprise only  non-zero squares or  only non-squares if $\al =0, \pm 1$.  Hence, we avoid consideration of such triples.

As a further preliminary, observe that, if  $q \equiv 3\ (\m 4)$ and  if $\T_\al$ is a triple consisting of squares, then $-T_{-\al}$ is a triple   of non-squares.   Conseqently, in this case $N_q=M_q$

   More generally, for any odd $q$,  we have

\begin{equation}\label{Meq}
M_q  = \frac{1}{8}\sum_{\alpha \neq 0, \pm 1}(1+\la(\alpha))(1+\la(\alpha-1))(1+\la(\alpha+1) ), 
\end{equation}
and 
\begin{equation}\label{Neq}
N_q  = \frac{1}{8}\sum_{\alpha \neq 0, \pm 1}(1-\la(\alpha))(1-\la(\alpha-1))(1-\la(\alpha+1) ).
\end{equation}

Now, set
\[ S_1=\sum_{\alpha \neq 0, \pm1}\la(\alpha); \quad S_2=\sum_{\alpha \neq 0, \pm1}\la(\alpha -1); \quad S_3=\sum_{\alpha \neq 0, \pm1}\la(\alpha+1)\]
  and
\[T_1= \sum_{\alpha \neq 0, \pm 1}\la(\alpha(\alpha-1)); \;T_2= \sum_{\alpha \neq 0, \pm 1}\la(\alpha(\alpha+1)); \;  T_3= \sum_{\alpha \neq 0, \pm 1}\la(\alpha^2-1).   \]

  Then
  \[M_q= \frac{1}{8}\left(q-3 + \sum_{i=1}^{3} S_i+ \sum_{i=1}^3T_i + J(-1)\right) \]
  and
\[N_q= \frac{1}{8}\left(q-3 - \sum_{i=1}^{3} S_i+ \sum_{i=1}^3T_i - J(-1)\right). \]

  From (\ref{sumla}),
\[S_1 =\sum_\alpha \la(\alpha)- 1 -\la(-1)=-1 - \la(-1); \; S_2=-\la(-2) - \la(-1); \; S_3= -1-\la(2), \]
whereas, from Lemma \ref{BEW},
\[ T_1=\sum_{\alpha \neq -1}\la(\alpha(\alpha-1))=-1-\la(2); \; T_2= -1- \la(2); \; T_3= -1- \la(-1). \]
Further,  $J(-1)=0$ if $q  \equiv 3\ (\m 4)$. But, when  $q \equiv 1\ (\m  4)$, by Lemma \ref{KR},  we have

\begin{equation}\label{Jeq}
J(-1) = \begin{cases} -2s, & \mbox{if } q \equiv 1\ (\m 8),\\
                                       2s, & \mbox{if } q \equiv 5\ (\m 8).
                                       \end{cases} 
\end{equation}
We also have the well-known facts that
\begin{equation}\label{l2}
\la(2)=\begin{cases} 1, & \mbox{if } q \equiv \pm 1 \ (\m 8),\\

                                                                                                       -1, &  \mbox{ if } q \equiv \pm 3 \ (\m 8),
 \end{cases}
  \end{equation}
and
  \begin{equation}\label{l-2}\la(-2)=\begin{cases} 1, & \mbox{if } q \equiv  1 \mbox{ or } 3\ (\m 8),\\

                                                                                                       -1, &  \mbox{ if } q \equiv  5 \mbox{ or }7 \ (\m 8).
                                                                                                       \end{cases} 
                                                                                                      \end{equation}

To evaluate $M_q, N_q$ from (\ref{Neq}), (\ref{Jeq}), (\ref{l2}) and (\ref{l-2}), we consider in turn the  five cases of Theorem \ref{main} and illustrate the result for some small values of $q$.

\noindent
{\bf Case 1:} {\it If $ q \equiv 7\  (\m 8)$,  then $\ds{N_q =\frac{q-7}{8}}.$
\begin{proof}
  Here  $\la(-1)=-1, \la(2) =1, \la(-2)=-1$. Thus $S_1 =0, S_2= 2, S_3=-2, T_1=T_2=-2, T_3=0 $, while $J(-1)=0$.  Hence
  \[8 N_q=( q-3 +0-4)= q-7.\]
\end{proof}

{\em Small examples of Case 1 include $M_7=N_7=0,M_{23}= N_{23} =2$.}

\medskip
\noindent
{\em \bf Case 2:} {\it If $ q \equiv 3\  (\m 8)$,  then $\ds{N_q =\frac{q-3}{8}}.$

\begin{proof} Now $\la(-1)=-1, \la(2)=-1, \la(-2)=1$. Thus $S_1= S_2=S_3=T_1=T_2=T_3=0$. Also, $J(-1)=0$. The reult is now immediate.
\end{proof}

{\em Small examples of Case 2 include $M_3=N_3=0,M_{11}= N_{11}=1,M_{19}= N_{19} =2$.}

\medskip
\noindent
{\em \bf Case 3:} {\it If $ q \equiv 5\  (\m 8)$, then}
  \[ M_q=\frac{q+2s-7}{8}q; \quad N_q =\frac{q-2s-3}{8}, \]
{\it where $q=s^2+t^2,\   p \nmid s, \  s \equiv 1 \ (\m 4)$ (as in Lemma $ \ref{KR})$.}

\begin{proof}
Here $q=p^d$, where also $p \equiv 5\ (\m 8)$ and $r$ is odd. We have $\la(-1)=1, \la(2)= \la(-2)=-1$. Hence, $S_1=-2, S_2= S_3=0, T_1=T_2=0, T_3=-2$.

Further, let $\al$ be a primitive element in $\F$. Then $-1= \al^\frac{q-1}{2}$ is the square of $\al^\frac{q-1}{4}$ but not a fourth power, since $\frac{q-1}{4}$ is odd.
Hence  $J(-1)=2s$.
\end{proof}

{\em Small examples of Case 3 include

  $M_5=N_5=0$ (since $5=1^2+2^2$),

 $M_{13}=0, \  N_{13} =2$ (since $13=(-3)^2+2^2$),
 
  $M_{29}=4,\ N_{29}=2$ (since $29=5^2+2^2$),
  
  $M_{125}=12, \  N_{125}=18$  (since $125=(-11)^2 +2^2 $, but  ignoring $125=5^2+10^2$ because $p\nmid s$).}

\medskip
\noindent
{\em \bf Case 4:} {\it If $ q=p^d \equiv 1\  (\m 8)$, where $p \equiv 1\ (\m 4)$, then  }
     \[   M_q =\frac{q-2s-15}{8};\quad  N_q =\frac{q+2s-3}{8}, \]
{\it where $q=s^2+t^2,\ p\nmid s,\  s \equiv 1 \ (\m 4)$.}

\begin{proof}
Here  $\la(-1)= \la(2)=\la(-2)=1$. Hence, $S_1=S_2=S_3=T_1=T_2=T_3=-2$. This time $\frac{q-1}{4}$ is even and so $-1$ is a fourth power and $J(-1)=-2s$.
\end{proof}

{ \em  Small examples of Case 4 include 

$M_{17}=0,\ N_{17} =2$ (since $17=1^2+4^2$),

 $M_{25}=N_{25}=2$  (since $25=(-3)^2 +4^2$), 
 
  $M_{169}=18, \ N_{169}= 22$  (since $169=5^2+12^2$), 
  
  $M_{289}=38,\ N_{289}=32$ (since $289= (-15)^2+8^2$).}

\medskip
\noindent
{\em\bf Case  5:} {\it If $ q=p^d \equiv 1\  (\m 8)$, where $p \equiv 3 \ (\m 4)$, then $q$ is a square and}

\[M_q =\frac{1}{8}\left(q -2 (-1)^{d/2}\sqrt{q}-15\right),\quad N_q =\frac{1}{8}\left(q +2 (-1)^{d/2}\sqrt{q}-3\right). \]
\begin{proof} As in Case $4$, each $S_i$ and $T_i$ has the value $-2$. Again $(-1)$ is a fourth power in $\F$ so that, by Lemma $\ref{KR}$, $J(-1)=-2s=-2(-1)^{d/2}\sqrt{q}$.
\end{proof}

 {\em Small examples of Case $5$ include $M_9=N_9=0,\ M_{49}=6,\  N_{49}=4, \  M_{81}=6,\ N_{81} =12$}.

\em
\section{  Further results and remarks on triples}

Although the expressions for  $M_q$ and $N_q$ in Theorem \ref{main} determine these numbers precisely,  the expressions in Cases 3 and 4  do not depend explicitly on $q$ alone. To remedy this, we deduce lower bounds that depend more explicitly on $q$.  

\begin{corollary} \label{SDC1} Let $q=p^d$, where $p$ is an odd prime. Then
\begin{equation} \label{Nbound}
 N_q \ge \frac{1}{8}\Big(q -2\sqrt{q}-3\Big), 
\end{equation}
with equality if and only if  $p \equiv 3\;(\m 4)$ and $d=2m$, where $m$ is odd.
Further,
\begin{equation} \label{Mbound}
 M_q \ge \frac{1}{8}\Big(q -2\sqrt{q}-15\Big), 
\end{equation}
with equality if and only if $p \equiv 3\;(\m 4)$ and $d=4m$, where $m$ may be even or odd.
\end{corollary}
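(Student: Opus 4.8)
The plan is to extract the inequalities directly from the five-case evaluation in Theorem \ref{main}, treating $N_q$ and $M_q$ separately and scanning through the cases to locate where the minimum is attained. For the bound on $N_q$, I would first dispose of the cases where $N_q$ is large: in Case 1, $N_q=\frac{q-7}{8}>\frac{q-2\sqrt q-3}{8}$ since $2\sqrt q>4$ for $q\ge 7$; in Case 2, $N_q=\frac{q-3}{8}$, which is strictly larger than the claimed bound; in Case 3, $N_q=\frac{q-2s-3}{8}$ with $q=s^2+t^2$ and $t\ne 0$, hence $|s|<\sqrt q$, giving $N_q>\frac{q-2\sqrt q-3}{8}$ strictly; and in Case 4, $N_q=\frac{q+2s-3}{8}$, which is strictly larger than $\frac{q-2\sqrt q-3}{8}$ because $2s>-2\sqrt q$ (again $|s|<\sqrt q$). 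The only remaining possibility is Case 5, where $N_q=\frac18\bigl(q+2(-1)^{d/2}\sqrt q-3\bigr)$ and $q$ is a power of a prime $p\equiv 3\pmod 8$ or $p\equiv 7\pmod 8$ with $d$ even and $q\equiv 1\pmod 8$. Equality with $\frac{q-2\sqrt q-3}{8}$ holds precisely when $(-1)^{d/2}=-1$, i.e. when $d/2$ is odd, i.e. $d=2m$ with $m$ odd; whenever $d/2$ is even the value is larger. So I would need to double-check that $d=2m$ with $m$ odd is compatible with $q=p^d\equiv 1\pmod 8$ for $p\equiv 3\pmod 4$: indeed $p^2\equiv 1\pmod 8$ already, so any even $d$ works, and in particular $d=2$ ($m=1$) is admissible, showing the bound is attained.

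For the bound on $M_q$, I would run the same case analysis. Case 1 and Case 2 give $M_q=\frac{q-7}{8}$ and $\frac{q-3}{8}$, both well above $\frac{q-2\sqrt q-15}{8}$. Case 3 gives $M_q=\frac{q+2s-7}{8}>\frac{q-2\sqrt q-7}{8}>\frac{q-2\sqrt q-15}{8}$. The two genuinely small cases are Case 4 and Case 5, where $M_q=\frac{q-2s-15}{8}$ and $M_q=\frac18\bigl(q-2(-1)^{d/2}\sqrt q-15\bigr)$ respectively. In Case 4, $|s|<\sqrt q$ strictly (since $t\ne 0$ because $p\nmid t$ would be needed for $t=0$, but $t=0$ forces $q=s^2$ with $p\mid s$, contradicting $p\nmid s$), so $M_q>\frac{q-2\sqrt q-15}{8}$ strictly. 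In Case 5, $M_q=\frac18\bigl(q-2(-1)^{d/2}\sqrt q-15\bigr)$, and equality with the bound holds exactly when $(-1)^{d/2}=1$, i.e. $d/2$ even, i.e. $d=4m$ with $m$ arbitrary (even or odd); when $d/2$ is odd the value exceeds the bound. Again $d=4m$ is compatible with the Case 5 hypotheses since $p^{4m}\equiv 1\pmod 8$ for $p$ odd, so the bound is attained (e.g. $q=p^4$).

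The only mild subtlety — and the step I would be most careful about — is the strictness of the inequality $|s|<\sqrt q$ in Cases 3 and 4, which rests on $t\ne 0$ in the representation $q=s^2+t^2$; this follows from the normalization $p\nmid s$ together with the fact that if $t=0$ then $q=s^2$ forces $p\mid s$. With that in hand, the argument is a finite verification over the five cases, and assembling the ``equality if and only if'' clauses amounts to reading off which values of $d$ put Case 5 on the boundary. I would close by remarking that the two equality conditions are consistent: $d=2m$ with $m$ odd for the $N_q$ bound and $d=4m$ for the $M_q$ bound, and these overlap only when $4m'=2m$ with $m$ odd, i.e. never — so no single $q$ makes both bounds sharp simultaneously, which is worth noting but not needed for the statement.
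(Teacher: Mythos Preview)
Your proof is correct and follows essentially the same route as the paper: a case-by-case scan of the five evaluations in Theorem~\ref{main}, using the strict inequality $|s|<\sqrt{q}$ to handle Cases~3 and~4 and reading off the equality conditions from Case~5. You supply more detail than the paper (in particular, an explicit justification that $t\neq 0$ forces $|s|<\sqrt{q}$, and the check that the relevant values of $d$ actually fall under Case~5), whereas the paper additionally notes that the bounds are vacuous for $q<9$ (respectively $q<25$); neither addition affects the logic.
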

\begin{proof}  The bound $(\ref{Nbound}$ is vacuous unless $q \geq 9$ in which case, since $|s|< \sqrt{q}$, from Cases 1--5,  $ N_q$ exceeds the right side except when equality occurs in Case 5 with $n=2m$, $m$ odd. 

Similarly, $(\ref{Mbound})$ is vacuous unless $q \geq 25$  and then the right side exceeds the right side except in Case 5 with $4|d$.
\end{proof}

\begin{corollary}\label{MNpos}
For any odd prime power exceeding $9$ there exists a triple  of consecutive non-squares in $\F$.  Further, for any $q \notin\{3,5,7,9,13,17\}$, there exists a triple of consecutive non-zero squares.
\end{corollary}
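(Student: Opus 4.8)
The two assertions will both be read off from Corollary \ref{SDC1}, supplemented by the exact values supplied by Theorem \ref{main} for a handful of small fields. The guiding principle is that $M_q$ and $N_q$ are non-negative integers, so any lower bound for them that is strictly positive already forces the quantity to be at least $1$; hence each of $N_q \ge 1$ and $M_q \ge 1$ is literally the statement that a triple of the required type exists.

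First I would dispose of the non-squares. By the bound (\ref{Nbound}) we have $N_q \ge \frac18(q - 2\sqrt q - 3)$ for every odd prime power $q$, and since $q - 2\sqrt q - 3 = (\sqrt q - 3)(\sqrt q + 1)$, this right-hand side is strictly positive precisely when $\sqrt q > 3$, that is, $q > 9$. Therefore $N_q \ge 1$ whenever $q > 9$, which is exactly the claim that every such $\F$ contains a triple of consecutive non-squares. (The value $q = 9$ is the borderline one, where the bound only gives $N_9 \ge 0$; and indeed $N_9 = 0$ by Theorem \ref{main}, which is why $9$ must be excluded.)

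For the non-zero squares the same idea applies but the threshold is higher. The bound (\ref{Mbound}) gives $M_q \ge \frac18(q - 2\sqrt q - 15) = \frac18(\sqrt q - 5)(\sqrt q + 3)$, which is strictly positive exactly when $\sqrt q > 5$, i.e. $q > 25$; so $M_q \ge 1$ for every odd prime power $q \ge 27$. It remains to treat the odd prime powers with $q \le 25$ that are \emph{not} among $3,5,7,9,13,17$, namely $q \in \{11,19,23,25\}$; and here Theorem \ref{main} (see the small examples recorded there) gives $M_{11} = 1$ and $M_{19} = M_{23} = M_{25} = 2$, each at least $1$. Combining this with the range $q \ge 27$ yields a triple of consecutive non-zero squares for every odd prime power $q \notin \{3,5,7,9,13,17\}$, which is the assertion. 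The proof is essentially immediate from Corollary \ref{SDC1}; the only point requiring attention — and the sole ``obstacle'' — is that for the four fields $q \in \{11,19,23,25\}$ the bound (\ref{Mbound}) is vacuous (there $q - 2\sqrt q - 15 \le 0$), so one cannot avoid invoking the exact values of $M_q$ from Theorem \ref{main} in those cases. If one wishes also to record that the six fields $q \in \{3,5,7,9,13,17\}$ are genuine exceptions, Theorem \ref{main} supplies $M_q = 0$ for each of them, but that is not needed for the statement as phrased.
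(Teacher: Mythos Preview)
Your proof is correct and follows essentially the same approach as the paper: the paper's own argument is the one-line remark that the result ``follows from Corollary \ref{SDC1} and the illustrated values in the proof of Theorem \ref{main}'', and you have simply unpacked this, with the factorizations $(\sqrt{q}-3)(\sqrt{q}+1)$ and $(\sqrt{q}-5)(\sqrt{q}+3)$ making the thresholds $q>9$ and $q>25$ transparent, and the four residual cases $q\in\{11,19,23,25\}$ handled by the recorded values of $M_q$.
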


\begin{proof}  Follows from Corollary \ref{SDC1} and the illusrasted values in the proof of Theorem \ref{main}.
\end{proof}

In connection with the first existence statement  of Corollary \ref{MNpos}, we remark that  it  is an immediate consequence of   Theorem 1 of \cite{CST15} for all odd prime powers $q$ exceeding 169.   Specifically, the latter asserts that  there exists  a triple  of consecutive primitive elements of $\F$ except when $q \in \{3, \ 5,\ 7,\ 9,\  13,\ 25,\ 29,\ 61,\  81,\  121, \ 169\}$.   The proof of that theorem is, however, far more intricate.

\medskip
Finally, the following is the application of  Theorem \ref{main} (in respect of $N_q$)  referred to in the introduction, see \cite{ACDT23}, Lemma 2.2.   
\begin{theorem}\label{ACDT}
Let $m>1$ be a positive integer and $R$ be  an integral ring of cardinality $q$.  If, for some positive integer $n$, each matrix in the matrix ring $M_n(R)$ is representable as a sum of a tripotent and a $m$-potent, then $R$ is a finite field $\F$ with $q$ an odd prime power and either  $q\le9$ and  ${\frac{q-1}{2}\big{|}( m-1)}$, or $q>9$ and $(q-1)| (m-1)$.
 \end{theorem}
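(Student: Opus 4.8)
First I would translate the matrix hypothesis into a statement about the field~$\F$ and then feed that statement into Theorem~\ref{main} via Corollary~\ref{MNpos}. Since $R$ is a nonzero finite ring without zero divisors, left multiplication by a nonzero element is injective, hence bijective, so $R$ is a finite division ring and Wedderburn's theorem gives $R=\F$, a finite field of order~$q$. That $q$ must be odd, and more generally those parts of the argument that do not concern squares and non-squares, I would take over from \cite{ACDT23}: the point in characteristic~$2$ is that $X^3-X=X(X-1)^2$ is not squarefree, so tripotents in $M_n(R)$ need no longer be semisimple and must be treated by a separate device. So assume $q=p^d$ with $p$ odd.

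The core of the argument is to apply the hypothesis only to the scalar matrices $aI_n$; this also explains why the quantifier ``for some~$n$'' costs nothing. Suppose $a\in\F$ and $aI_n=E+P$ with $E^3=E$ and $P^m=P$. The minimal polynomial of $E$ divides $X^3-X$, which splits into distinct linear factors $X$, $X-1$, $X+1$ over $\F$, so $E$ is diagonalizable over $\F$; writing $E=SDS^{-1}$ with $D$ diagonal, $P=aI_n-E=S(aI_n-D)S^{-1}$ is diagonalized by the same $S$, every diagonal entry lies in $\{a-1,a,a+1\}$, and $P^m=P$ forces each of these entries to be an $m$-potent of~$\F$. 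Conversely, if some $b\in\{a-1,a,a+1\}$ satisfies $b^m=b$, then $aI_n=(a-b)I_n+bI_n$ gives the required decomposition (here $a-b\in\{-1,0,1\}$, so $(a-b)^3=a-b$). Hence the hypothesis implies: \emph{for every $a\in\F$, at least one of $a-1,a,a+1$ is an $m$-potent of~$\F$}. The $m$-potents of $\F$ are exactly $\{0\}\cup\mu_e$, where $\mu_e$ is the subgroup of $\F^{*}$ of order $e=\gcd(m-1,q-1)$; and $(q-1)\mid(m-1)$ is equivalent to $e=q-1$, that is, to every element of $\F$ being an $m$-potent, in which case the conclusion holds.

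So suppose $(q-1)\nmid(m-1)$, i.e.\ $e$ is a proper divisor of $q-1$. If $e\mid\tfrac{q-1}{2}$, then $\mu_e$ consists of non-zero squares, so the italicised condition says that no triple $\{a-1,a,a+1\}$ of consecutive elements of $\F$ is made up solely of non-squares; that is, $N_q=0$. For $q>9$ this contradicts Corollary~\ref{MNpos} --- the consequence of Theorem~\ref{main} that is new for non-prime~$q$. If instead $e\nmid\tfrac{q-1}{2}$, write $q-1=2^{a}b$ with $b$ odd; then $e=2^{a}c$ with $c\mid b$ and $c<b$, whence $e\le\tfrac{q-1}{3}$. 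Since every triple lies inside one of the $p^{d-1}$ cyclic lines $\al+\Fp$, the set $\{0\}\cup\mu_e$, which has $e+1$ elements, must meet every triple, so $e+1\ge p^{d-1}\lceil p/3\rceil$; for $q>9$ this leaves only the possibility $q=p\equiv 1\ (\m 3)$ with $e=(p-1)/3$, so that $\mu_e$ is the group of non-zero cubes of~$\Fp$. I would rule this out by the elementary fact --- established in \cite{ACDT23} --- that $\{0\}\cup\mu_e$ cannot be a minimum-size triple-covering of~$\Fp$. Thus $(q-1)\mid(m-1)$ whenever $q>9$, while for $q\in\{3,5,7,9\}$ a direct check of the italicised condition shows $e\in\{\tfrac{q-1}{2},\,q-1\}$, i.e.\ $\tfrac{q-1}{2}\mid(m-1)$.

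The genuine difficulty is concentrated in this last step: the case $e\mid\tfrac{q-1}{2}$ is settled outright by the value of $N_q$ from Theorem~\ref{main}, but the residual cubic case --- and the characteristic-$2$ reduction --- fall outside the scope of results about squares and non-squares and require the separate combinatorial input of \cite{ACDT23}.
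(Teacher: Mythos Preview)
The paper does not actually prove Theorem~\ref{ACDT}: it merely quotes the result from \cite{ACDT23} (Lemma~2.2 there) and records that Theorem~\ref{main}, via the positivity of $N_q$ for $q>9$ in Corollary~\ref{MNpos}, is the ingredient supplied by the present paper. So there is no in-paper proof to compare your attempt against.

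That said, your outline is consistent with how the paper describes the connection. You correctly isolate the step where this paper's contribution enters: when $e=\gcd(m-1,q-1)$ divides $(q-1)/2$, the $m$-potents are squares, so the triple-meeting condition forces $N_q=0$, and Corollary~\ref{MNpos} (hence Theorem~\ref{main}) rules this out for $q>9$. Your reduction via scalar matrices and diagonalisation of tripotents in odd characteristic is the natural one, and your explicit acknowledgement that the characteristic-$2$ reduction and the residual ``cubic'' case $e=(p-1)/3$ lie outside the square/non-square machinery and must be imported from \cite{ACDT23} matches the paper's own framing.

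One small point on your sketch of the case $e\nmid(q-1)/2$: the inequality $e+1\ge p^{\,d-1}\lceil p/3\rceil$ combined with $e\le(q-1)/3$ does not by itself eliminate $p=3$, $d\ge3$ (there $\lceil p/3\rceil=1$ and the two bounds just meet). That subcase is still excluded, but by the divisibility constraint $e\mid q-1$ together with $2^{a}\,\|\,e$, not by the counting bound alone; since you are in any event deferring the fine detail of this branch to \cite{ACDT23}, this is only a remark on the write-up rather than a genuine gap.
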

Further ring-theoretical consequences are given in \cite{ACDT23}, Section 6.  Some background material is in  \cite{smz-2021} and \cite{laa-2021}.

\section{Proof of Theorem \ref{5thm} }
{\em }

Let $q=5^d$ so that $q \equiv 1$ or $ 5\ (\m 8)$ according as $d$ is even or odd, respectively.  The result and its proof are highly characteristic-dependent, so that the elements $\{0,1,2,3,4\}$  are respectively $\{0,1,2,-2,-1\}$. Moreover $\la (\pm1) =1$, while $\la(2)=\la(-2) =1 $ or $-1$ according as $d$ is even or odd, respectively.   

 Instead of triples we now seek to count quadruples $ \Q_\al= \{\al-1, \al, \al+1, \al+2\}$ of consecutive elements of $\F$ wholly consisting of non-zero squares (counted by $m_q$) or of non-squares (counted by $n_q$).  Since no element of a relevant quadruple $\Q_\al$ can be zero we can suppose $\al \neq 0, \pm 1, -2$.  We shall call the quadruple $ \Q_2= \{1,2,3,4\} $ of elements of $\Ffive$ the {\em basic quadruple}.  In particular, $\Q_2$ consists of non-zero squares if $d$ is even whereas $\Q_2$ contains two squares and two non-squares if $d$ is odd.

\begin{lemma} \label{M-N}
Suppose $q=5^d$, where $d$ is odd. Then $m_q=n_q$.
\end{lemma}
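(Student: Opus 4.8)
The plan is to exploit the same reflection symmetry $\al \mapsto -\al$ that was used in the triples case when $q\equiv 3\ (\m 4)$, here specialised to the characteristic-$5$ setting. First I would note that when $d$ is odd we have $q\equiv 5\ (\m 8)$, hence $q\equiv 3\ (\m 4)$ and $\la(-1)=-1$. The key observation is that the quadruple $\Q_\al=\{\al-1,\al,\al+1,\al+2\}$ is \emph{not} symmetric under $\al\mapsto-\al$ (its ``centre'' sits at $\al+\tfrac12$, i.e.\ at $\al-2$ in $\Ffive$), so the naive reflection sends $\Q_\al$ to a translate of $-\Q_\al$ that is a quadruple of the same shape but reflected. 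Concretely, $-\Q_\al=\{-\al+1,-\al,-\al-1,-\al-2\}=\{\be-2,\be-1,\be,\be+1\}$ with $\be=-\al$, which upon reindexing $\be=\gl+1$ becomes $\Q_{\gl}$ with $\gl=-\al-1$. Thus $\al\mapsto-\al-1$ is a bijection of $\F\setminus\{0,\pm1,-2\}$ onto itself carrying $\Q_\al$ to $\{-(\al-1)\cdot(-1),\dots\}$ — more precisely carrying the multiset of values $\{\al-1,\al,\al+1,\al+2\}$ to $\{-(\al+2),-(\al+1),-\al,-(\al-1)\}$, i.e.\ to $-1$ times the original multiset.

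Granting that, the argument is immediate: since $\la(-1)=-1$, every element of $\Q_\al$ is a non-zero square precisely when every element of the reflected quadruple $\Q_{-\al-1}$ is a non-square, and vice versa. Hence the involution $\al\mapsto -\al-1$ restricts to a bijection between the set of $\al$ counted by $m_q$ and the set of $\al$ counted by $n_q$, giving $m_q=n_q$. I would present this as: for $\al\neq 0,\pm1,-2$, the four shifts $\{\al-1,\al,\al+1,\al+2\}$ and $\{(-\al-1)-1,(-\al-1),(-\al-1)+1,(-\al-1)+2\}=\{-\al-2,-\al-1,-\al,-\al+1\}$ are negatives of one another element by element (after pairing $\al-1\leftrightarrow -\al+1$, $\al\leftrightarrow-\al$, $\al+1\leftrightarrow-\al-1$, $\al+2\leftrightarrow-\al-2$), and one should check the excluded set $\{0,1,-1,-2\}$ is preserved by $\al\mapsto-\al-1$ (indeed $0\mapsto-1$, $1\mapsto-2$, $-1\mapsto0$, $-2\mapsto1$), so no spurious quadruples are created or lost.

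The only real point requiring care — and the step I would flag as the main obstacle — is the bookkeeping of the reflection: because a quadruple has even length its ``axis'' is not a field element, so one must get the correct translation ($\al\mapsto-\al-1$, not $\al\mapsto-\al$) and then verify that this map is genuinely an involution on the relevant index set and that it exactly swaps the ``all squares'' and ``all non-squares'' conditions. Once the map and the pairing of the four coordinates are written down explicitly, the equality $m_q=n_q$ follows with no computation. It is worth remarking that this lemma is the quadruple analogue of the parenthetical observation made at the start of Section~\ref{secproof} (that $\T_\al$ all squares $\iff$ $-\T_{-\al}$ all non-squares when $q\equiv3\ (\m4)$), and indeed the formula $(\ref{5odd})$ with its single shared value $m_q=n_q$ is consistent with this.
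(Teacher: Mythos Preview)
Your argument contains a fatal arithmetic error at the very first step: you assert that ``$q\equiv 5\ (\m 8)$, hence $q\equiv 3\ (\m 4)$ and $\la(-1)=-1$''. But $5\equiv 1\ (\m 4)$, so in fact $q=5^d\equiv 1\ (\m 4)$ for \emph{every} $d$, and consequently $\la(-1)=1$ in $\F$ regardless of the parity of $d$ (as the paper itself records at the start of Section~5). Your reflection $\al\mapsto -\al-1$ does indeed carry the set $\{\al-1,\al,\al+1,\al+2\}$ to its negative, but since $-1$ is a square this map sends square quadruples to square quadruples and non-square quadruples to non-square quadruples: it proves nothing about $m_q$ versus $n_q$.

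The paper's proof uses a genuinely different bijection, exploiting instead that $\la(2)=-1$ when $d$ is odd. The key structural observation (special to characteristic $5$) is that each equivalence class $\be+\Ffive$ has exactly five elements, and any four of them automatically form a run of four consecutive elements. Multiplication by $2$ permutes these classes, sends the basic class $\Ffive$ to itself (where the quadruple $\Q_2$ is mixed when $d$ is odd), and --- because $\la(2)=-1$ --- interchanges squares with non-squares. Hence a class containing four non-zero squares is matched with one containing four non-squares, and $m_q=n_q$. To salvage your write-up you should replace the reflection by (a suitable affine version of) multiplication by $2$; the bookkeeping you flagged as the ``main obstacle'' is then handled by the any-four-of-five-are-consecutive remark rather than by a coordinate pairing.
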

 \begin{proof}
First, observe that, since $p=5$, then  any 4-element subset of an equivalence class $\be  +\Ffive$ can be arranged as a quadruple $\Q_\al$  of consecutive elements.  As noted, the basic quadruple does not consist only of squares or only of non-squares.  Further, if $d$ is odd (so that $\la(2)=-1$), an equivalence class containing $\be \in \F\setminus\Ffive$  which contains 4  non-zero squares or 4 non-squares implies that the equivalence class which contains $2\be$ must contain 4 non-squares or 4 non-zero squares, respectively, i.e., a quadruple of consecutive non-zero squares is associated with one comprising non-squares.  The result follows.
\end {proof}

A character sum involving a product of four linear factors is a new consideration..

\begin{lemma}\label{4lin}
 Define $\displaystyle{V= \sum_{\al \in \F}\la(\al(\al-1)(\al+1)\al+2)})$.  Then
	\[ V=\begin{cases}-2s-1, \mbox{ if   d  is even},\\
\;\; \;2 s-1,\,  \mbox{ if   d  \textrm  is odd}, 
\end{cases}\]
where $s$ is as in Case $3$ of Theorem $ \ref{main}$. 

\end{lemma}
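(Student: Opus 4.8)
The plan is to reduce the quartic character sum $V=\sum_{\al}\la(\al(\al-1)(\al+1)(\al+2))$ to the Jacobsthal sum $J(-1)$, which is already evaluated in Lemma~\ref{KR}. The obstacle is that the four roots $\{-1,0,1,-2\}$ are not symmetric about the origin, so a naive substitution $\al\mapsto-\al$ does not simplify the product. The key observation, valid only in characteristic $5$, is that $\{-1,0,1,-2\}=\{-1,0,1,3\}$ and the substitution $\al\mapsto \al+2$ sends this set to $\{1,2,3,5\}=\{1,2,3,0\}$, i.e.\ the roots are permuted. More usefully, one looks for a substitution making the root set symmetric: the four roots sum to $-1+0+1+3=3=-2$, so the ``centre of mass'' is $-2/4=3/4$. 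In $\Ffive$, $1/4=4$ (since $4\cdot4=16\equiv1$), so the centre is $3\cdot4=12\equiv2$; translating $\al\mapsto \al+2$ moves the centre to $0$ and the roots to $\{-1-2,\,0-2,\,1-2,\,3-2\}=\{-3,-2,-1,1\}=\{2,3,4,1\}$, which is symmetric about $0$ modulo $5$ — indeed $\{1,-1,2,-2\}$. So after the substitution,
\[
V=\sum_{\al}\la\big((\al-1)(\al+1)(\al-2)(\al+2)\big)=\sum_{\al}\la\big((\al^2-1)(\al^2-4)\big).
\]
In $\Ffive$, $\al^2-4=\al^2+1$, so $V=\sum_{\al}\la((\al^2-1)(\al^2+1))=\sum_{\al}\la(\al^4-1)$.

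Next I would relate $\sum_{\al}\la(\al^4-1)$ to $J(a)=\sum_{x}\la(x(x^2+a))$. Writing $\al^4-1=(\al^2-1)(\al^2+1)$ and substituting $\beta=\al^2$: as $\al$ ranges over $\F$, each nonzero square $\beta$ is hit exactly twice and $\beta=0$ once, so
\[
\sum_{\al}\la(\al^4-1)=\la(-1)+2\sum_{\beta\ \mathrm{square}\neq0}\la((\beta-1)(\beta+1))
=\la(-1)+\sum_{\beta\neq0}(1+\la(\beta))\la(\beta^2-1).
\]
Since $\la(-1)=1$ in characteristic $5$, this gives $V=1+\sum_{\beta\neq0}\la(\beta^2-1)+\sum_{\beta\neq0}\la(\beta(\beta^2-1))=1+\sum_{\beta\neq0}\la(\beta^2-1)+J(-1)$, because $\sum_{\beta}\la(\beta(\beta^2-1))=J(-1)$ and the $\beta=0$ term is zero. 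By Lemma~\ref{BEW}, $\sum_{\beta}\la(\beta^2-1)=-1$, so $\sum_{\beta\neq0}\la(\beta^2-1)=-1-\la(-1)=-2$. Hence $V=1-2+J(-1)=J(-1)-1$.

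Finally I would plug in the value of $J(-1)=J(a)$ with $a=-1$ a square in $\Ffive$ (indeed $-1=4=2^2$), using the analysis already done in the proof of Theorem~\ref{main}: when $d$ is even, $q\equiv1\ (\m 8)$ and $-1$ is a fourth power, so $J(-1)=-2s$; when $d$ is odd, $q\equiv5\ (\m 8)$, $\tfrac{q-1}{4}$ is odd, $-1$ is a square but not a fourth power, so $J(-1)=2s$. Therefore $V=-2s-1$ when $d$ is even and $V=2s-1$ when $d$ is odd, as claimed. The only delicate point is keeping track of which power $-1$ is in $\F$; this is exactly the dichotomy already handled in Cases~3--5 of Theorem~\ref{main}, so no new work is needed there. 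The genuinely characteristic-$5$ step is the coincidence that recentring the four roots $\{-1,0,1,-2\}$ lands them on a set symmetric about $0$ and that $\al^2-4=\al^2+1$, which collapses the quartic to $\al^4-1$; this is what makes the reduction to $J(-1)$ possible and is why the argument fails for $p>5$.
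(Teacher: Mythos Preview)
Your proof is correct and follows essentially the same route as the paper: shift $\al\mapsto\al+2$ to symmetrise the roots to $\{\pm1,\pm2\}$, rewrite the quartic as $(\al^2-1)(\al^2+1)$ using $-4=1$ in $\Ffive$, pass to $\be=\al^2$ via the weighting $1+\la(\be)$, and apply Lemma~\ref{BEW} and Lemma~\ref{KR} to obtain $V=J(-1)-1$. Your extra commentary on the centre-of-mass heuristic and the characteristic-$5$ coincidence is helpful exposition but not a different argument.
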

\begin{proof}Replacing $\al$ by $\al+2$, we have
\begin{eqnarray}\label{5V}
V&=&\sum_\al \la((\al+1)( \al+2)( \al+3)(\al+4))=\sum_\al\la((\al+1)(\al-1)(\al+2)(\al-2)) \nonumber\\
&=&\sum_\al \la((\al^2- 1)(\al^2-4))=\sum_\al \la((\al^2- 1)(\al^2+1)).
\end{eqnarray}
Now, in the sum of (\ref{5V}), set $\be=\al^2$ and multiply by the weighting factor $1+\la(\be)$ which (correctly) assigns 2 to each non-zero square  $\be$ (corresponding to $\pm\al$) and 1 for $\be=0$.  Thus
\[V= \sum_{\be} \la(\be^2-1)+ \sum_\be\la((\be(\be^2-1))= -1+J(-1), \]
by Lemma \ref{BEW},  and the result follows from Lemma \ref{KR}.
\end{proof}

The remainder of the proof parallels that of  Theorem \ref{main} in Section \ref{secproof}.   Corresponding to (\ref{Meq}) and (\ref{Neq}), we have
\begin{equation}\label{Meq1}
m_q  = \frac{1}{16}\sum_{\alpha \neq 0, \pm 1,-2}(1+\la(\alpha))(1+\la(\alpha-1))(1+\la(\alpha+1) )(1+\la(\al+2)), 
\end{equation}
and 
\begin{equation}\label{Neq1}
n_q  = \frac{1}{16}\sum_{\alpha \neq 0, \pm 1,-2}(1-\la(\alpha))(1-\la(\alpha-1))(1-\la(\alpha+1) )(1-\la(\al+2)).
\end{equation} 

Thus
 \begin{equation}\label{Meq2}
m_q= \frac{1}{16}\left( q-4+ \sum_{i=1}^4S_i+ \sum_{i=1}^6 T_i+ \sum_{i=1}^4U_i+V\right)
\end{equation}
and
 \begin{equation}\label{Neq2}
n_q= \frac{1}{16}\left( q-4- \sum_{i=1}^4S_i+ \sum_{i=1}^6 T_i- \sum_{i=1}^4U_i+V\right).
\end{equation}
Here $S_i$, $T_i$, $U_i$ are sums of the form $\displaystyle{\sum_{\al \ne 0, \pm1, -2}f(\al)}$, where the polynomial $f$ runs through all factors  of the polynomial $f(x)$ of degrees  1,2,3, respectively. Further $V$ is as defined in Lemma \ref{4lin} in which case  the restrictions on $\al$ are immaterial since $\la(0)=0$.   We evaluate these in turn.

\medskip
First the $S_i$.  We have
\begin{eqnarray*}
\sum_{\al \ne 0,-1,-2}\la(\al-1)&=&\sum_\al\la(\al-1) -1- \la(-2) -\la(2)=-1  -2 \la(2); \\
\sum_{\al \ne \pm1,-2}\la(\al)\qquad& =&\sum_\al\la(\al)-1 -1 - \la(-2)= -2- \la(2); \\
\sum_{\al \ne0, 1, -2}\la(\al+1) &=&\sum_\al\la(\al +1)-1-\la(2) -1=-2- \la(2); \\
\sum_{\al \ne0, \pm1}\la(\al+2)& =&\sum_\al\la(\al +2) -\la(2) -\la(-2)-1=-1-2 \la(2). \\
\end{eqnarray*}
Hence $S=\ds{\sum_{i=1}^4S_i} =-6 - 6\la(2)$.  Thus $S=-12$ if $d$ is even but $S=0$ if $d$ is odd.

\medskip
Similarly, using Lemma \ref{BEW},  we have
 $T=\ds{\sum_{i=1}^6 T_i} =-10- 8\la(2)$.  In particular, $T=-18$, if $d$ is even, whereas $T=-2$ if $d$ is odd.
  
 Finally,  for the $U_i$, Lemma \ref{KR} comes into play again.  We have
  \begin{eqnarray*}
  \sum_{\al\ne -2} \la(\al(\al-1)(\al+1))&=&  \sum_{\al} \la(\al(\al-1)(\al+1))- 1\\
  &=&J(-1) -1\\
  &=&\begin{cases} -2s-1, \mbox{ if $ d$ is even},\\
 \;\; 2s-1,\; \mbox{ if $ d$ is odd}.
  \end{cases}
  \end{eqnarray*}  Next, replacing $\al$ by $\al-2$ in the sum,
\begin{eqnarray*}
  \sum_{\al \ne -1} \la(\al(\al-1)(\al+2))&=& \sum_{\al } \la(\al(\al-1)(\al+2))-\la(-2)\\
  &=& \sum_\al\la((\al-2)(\al+2)\al)-\la(2)\\
  &=&\sum_\al\la(\al(\al^2-4)) - \la(2)\\
  &=&J(1) - \la(2)= -2s- \la(2).
  \end{eqnarray*}
  Next,  again replacing $\al$ by $\al-2$ in the sum,
  \begin{eqnarray*}
  \sum_{\al\ne -1}\la((\al-1)\al(\al+2))&=& \sum_\al \la((\al+2) (\al-2)\al) -\la(2)\\
  &=&J(1)- \la(2)=-2s- \la(2).
  \end{eqnarray*}
  Finally, replacing $\al$ by $\al-1$ in the sum,
  \begin{eqnarray*}
  \sum_{\al \ne 1} \la(\al(\al+1)(\al+2))&=& \sum_\al \la((\al-1)\al(\al+1))- 1\\
  &=&J(-1)-1= \begin{cases} 
  -2s-1,  \mbox{ if $d$ is even},\\
\;\;  2s-1, \mbox{  if  $d$ is odd}.
  \end{cases}
  \end{eqnarray*}
  Set $U= \ds{\sum_{i=1}^4 U_i}$.  From the above, if $d$ is even, then $U=-8s-4$.  On the other hand, if $d$ is odd, then $U=0$.

\medskip
We put all this together using (\ref{Meq1}) and (\ref{Neq1}).  First, suppose $d$ is even.  Then
\[16m_q =q-4-12 -18-8s-4-2s-1= q=10s-30.\]
Further,
\[16n_q=q-4 -12-18-8s-4-2s-1=q+6s-7.\]
This yields (\ref{5even}).

\medskip
Now suppose $d$ is odd.  By Lemma \ref{M-N} we already know that $m_q= n_q$ (Lemma \ref{M-N}).   This is consistent with the fact that here $S=U=0$. In fact, from the above,
\[16n_q=q-4 -2+2s-1=q+2s-7, \]
in agreement with (\ref{5odd}).

This completes the proof of Theorem \ref{5thm}.

\medskip
 Given the apparently arbitrary form of the expressions  in Theorem \ref{5thm}, it is reassuring to present  some illustrative examples for small values of $d$.  A preliminary observation is that, if $d$ is even, then the basic quadruple comprises 4 consecutive non-zero squares so that $m_q \ge 1$.
 
 If $d=1$, then the basic quadruple is the only quadruple of consecutive elements and clearly $m_5=n_5=0$.  Since $5=1^1+2^2$ this agrees with  (\ref{5odd}) in this case.
 
 \medskip
 If $d=2$, then $q=25=(-3)^2+4^2$, so that $s=-3$.  Thus, from (\ref{5even}), $m_5=1, n_5=0$.  In particular, the basic quadruple is the only one consisting of squares.
 
 \medskip
 At this point, we recall that $ M_{25}=N_{25}=2$ and  pause  to identify explicitly the  triples of squares and non-squares in  $\Ftwofive$.  First, the basic quadruple $\Q_2$ of consecutive squares yields two triples $\T_2$ and $\T_3$ of consecutive non-zero squares.  Next, apart from the equivalence  class $\Ffive$  in $\Ftwofive$ there are four further equivalence classes which we now describe.   Let $\Ftwofive=\Ffive({\be)}$, where $\be^2=2$.   Then $\be$ has order 8 and is therefore a non-square in $\Ftwofive$. Further,  all  of $i\be, i=\pm1,\pm2$ are inequivalent and  the four classes are $i\be+\Ffive=i(\be+\Ffive), i= \pm1, \pm2$.  Moreover, $(\be-2)^2=\be+1$ and so the latter is a square.   Since $\be^2-1=1$, a square, then $\be-1$ is also a square.  Also $(\be-2)^3=-\be$ which has order 8 and so $\be-2$ is a non-square.  So is $\be+2$, since $ \be^2-4=-2$, a square.   Thus each of the four equivalence classes comprise three non-squares and two squares.  There are therefore no more quadruples of consecutive square or non-squares and no further triples of consecutive squares.   Observe also that $-(\be+\Ffive)$ is just the set of negatives of $\be+\Ffive$ and similarly for $\pm2(\be+\Ffive)$.  In $\be+\Ffive$ the non-square $\be$ lies between the squares $\be \pm1$ and so  each of $\pm(\be +\Ffive)$ does not contain a triple of consecutive non-squares, whereas, in    $\pm2(\be+\Ffive)$ the squares $2\be-2= 2 \be+3$ and $2\be+2$ are consecutive, thus  confirming that $M_{25}=2$. 
 
 \medskip
 We resume the consequences of Theorem \ref{5thm} for a few more values of $d$.
 If $d=3$, then $q=125=(-11)^2+2^2$.  Thus, from (\ref{5odd}), $m_{125}=n_{125}=16$.
 
 If $d=4$, then $q=625=(-7)^2+24^2$.  Thus $m_{625}=41,\ n_{625}= 36$.
 
 If $d=5$, then $q-3125=41^2+38^2$.  Thus $m_{3125}=n_{3125}=200$.
 
 If  $d=6$, then $q=15625=117^2+44^2$.   Thus $ m_{15625}=901,\  n_{15625}=1020$.
 
 \medskip
 
 From (\ref{5even}), if $d$ is even then, since  $|s|< \sqrt{q}$,   $n_q$ is positive iff $  (\sqrt{q} -3)^2> 16$, so certainly if 
 $q>49$.  In particular, there exists a quadruple of non-squares whenever $d  \geq 4$.         On the other hand  $m_q>1$  iff  $(\sqrt{q}-5)^2>80$ which occurs if $q>195$.   Hence,  there exists a quadruple of consecutive squares  (other than the basic quadruple) iff $d \geq  4 $.
 
 Further, if $d$ is odd, there exists a quadruple of non-zero squares or of non-squares whenever $(\sqrt{q}-1)^2 >8$ and so whenever $q>15$ which occcurs whever $d \geq 3$.
 
 Finally, when $q=5^d$, the existence of a quadruple of consecutive primitive elements of $\F$ is established in \cite{JT22} for all $d\ge 3$.     This is a deep result.

\section*{Acknowledgements}
We are grateful to the referee for a number of suggestions and corrections.

\end{document}